\newtheorem{theorem}{Theorem} 
\newtheorem{lemma}[theorem]{Lemma} 
\newtheorem{corollary}[theorem]{Corollary} 
\theoremstyle{definition}
\newtheorem{definition}[theorem]{Definition}
\newcommand{\ab}[1]{{\mathbf{#1}}} 
\DeclareMathAlphabet{\mathbfsl}{OT1}{cmr}{bx}{it}
\newcommand{\tupBold}[1]{\mathbfsl{#1}}
\newcommand{\N}{\mathbb{N}}
\newcommand{\Z}{\mathbb{Z}} 
\newcommand{\uli}[1]{\underline{#1}}
\newcommand{\Pol}{\mathrm{Pol}}
\newcommand{\Dep}{\mathrm{Dep}}
\newcommand{\adeg}{\mathrm{adeg}}
\newcommand{\algop}[2]{( {#1}, {#2} )}
\newcommand{\vb}[1]{\tupBold{#1}}
\newcommand{\vbr}[2]{\vb{#1}^{(#2)}}
\newcommand{\tup}[3]{(#1_{#2},\dots,#1_{#3})}
\newcommand{\wt}{\operatorname{wt}}
\newcommand{\NP}{\mathrm{NP}}
\newcommand{\PP}{\mathrm{P}}
\newcommand{\Pot}{\mathcal{P}}
\title[Systems of equations in supernilpotent algebras]{Solving systems of equations in supernilpotent algebras} %
\author[E.\ Aichinger]{Erhard Aichinger}
\address{Institute for Algebra, Johannes Kepler University Linz, Linz, Austria}
\email{erhard@algebra.uni-linz.ac.at}
\subjclass[2010]{08A40 (68Q25)}
\keywords{Supernilpotent algebras, polynomial equations, polynomial mappings,
circuit satisfiability}
\thanks{Supported by the Austrian Science Fund (FWF):P29931.}
\begin{document}
\bibliographystyle{alpha}

\maketitle

\begin{abstract}
  Recently, M.\ Kompatscher proved that for each finite supernilpotent algebra
  $\ab{A}$
  in a congruence modular variety, there is a polynomial time algorithm
  to solve polynomial equations over this algebra. Let $\mu$ be the maximal
  arity of the fundamental operations of $\ab{A}$, and let
  \[ d := |A|^{\log_2 (\mu) + \log_2 (|A|) + 1}.\]
   Applying a method that G.\ K{\'a}rolyi and C.\ Szab\'{o} had used to solve
  equations over finite nilpotent rings, we show that for $\ab{A}$,
  there is $c \in \N$ such that a solution of every system of $s$
  equations in $n$ variables
  can be found
  by testing 
  at most $c n^{sd}$ (instead of all $|A|^n$ possible)
  assignments to the variables. This also yields new information
  on some circuit satisfiability problems. 
\end{abstract}

\section{Introduction} \label{sec:intro}
We study systems of polynomial equations over
a finite algebraic structure $\ab{A}$. Such a system
is given by equations of the form $p(x_1,\ldots, x_n)
\approx q(x_1, \ldots, x_n)$, where $p,q$
are polynomial terms of $\ab{A}$; a polynomial term
of $\ab{A}$ is a term of the algebra $\ab{A}^*$ which
is obtained by expanding $\ab{A}$ with one nullary
function symbol for each $a \in A$.
A \emph{solution} to a system $p_i(x_1, \ldots, x_n)
\approx q_i (x_1, \ldots, x_n)$ ($i = 1, \ldots, s$)
is an element $\vb{a} = \tup{a}{1}{n} \in A^n$ such that
$p_i^{\ab{A}} (\vb{a}) =  q_i^{\ab{A}} (\vb{a})$ for all
$i \in \{1, \ldots, s\}$.
The problem to decide whether such a solution exists
has been called ${\textsc{PolSysSat} (\ab{A})}$, and ${\textsc{PolSat} (\ab{A})}$
if the system consists of one single equation, and
the terms of the input are encoded as strings
over $\{x_1, \ldots, x_n\} \cup A \cup F$, where
$F$ is the set of function symbols of $\ab{A}$.
A survey of results on the computational complexity
of this problem is given, e.g.,  in \cite{IK:SIMC,Ko:TESP}.
In algebras such as groups, rings or Boolean algebras,
one can reduce an equation $p (\vb{x}) \approx q (\vb{x})$ to an equation of the form $f (\vb{x}) \approx y$, where
$y \in A$. A system of equations of this form
then has the form $f_i (\vb{x}) \approx y_i$
($i = 1, \ldots, s$).
For $n \in \N$, let $\Pol_n (\ab{A})$ denote the $n$-ary polynomial
functions on $\ab{A}$ \cite[Definition~4.4]{MMT:ALVV}.
For a finite nilpotent ring or group $\ab{A}$,
\cite{Ho:TCOT} establishes the existence of
a natural number $d_{\ab{A}}$ such that
for every $f \in \Pol_n (\ab{A})$
and for every $\vb{a} \in A^n$, there exists
$\vb{b}$ such that $f^{\ab{A}} (\vb{a})
= f^{\ab{A}} (\vb{b})$ and $\vb{b}$ has at most
$d_{\ab{A}}$ components that are different
from $0$. Hence the equation $f(\vb{x}) \approx y$
has a solution if and only if it has a solution
with at most $d_{\ab{A}}$ nonzero entries.
Thus for the algebra $\ab{A}$, testing only vectors with
at most $d_{\ab{A}}$ nonzero entries is an algorithm,
which, given an equation $f (\vb{x}) \approx y$ of length
$n$, takes at most $c(\vb{A}) \cdot n^{d_\ab{A} + 1}$ many steps
to find whether this equation is solvable:
there are at most
$\sum_{i=0}^{d_{\ab{A}}} {n \choose i} (|A| - 1)^i \le
c_{1} ({\ab{A}}) \cdot  n^{d_{\ab{A}}}$
 many evaluations to be done, each of them
 taking at most $c_{2} (\ab{A}) \cdot n$ many steps.
 The number $d_{\ab{A}}$ in \cite{Ho:TCOT}
 is obtained from Ramsey's Theorem and therefore
 rather large.
In \cite{Ko:TESP}, it is  proved
 that for every finite supernilpotent algebra in a congruence
 modular variety, such a number $d_{\ab{A}}$ exists, again
 using Ramsey's Theorem.
 For rings, lower values
 of $d_{\ab{A}}$ have been obtained in \cite{KS:EOPO} (cf. \cite{KS:TCOT}).
 In \cite{Fo:TCOT17, Fo:TCOT}, A.\ F\"oldv\'{a}ri provides polynomial
 time algorithms for solving equations over finite nilpotent groups and rings
 relying on the structure theory of these algebras.
  In this paper, we extend the method developed in \cite{KS:EOPO}
 from finite nilpotent rings to arbitrary finite supernilpotent
 algebras in congruence modular varieties. For such algebras,
 we compute $d_{\ab{A}}$ as  $|A|^{\log_2 (\mu) + \log_2 (|A|) + 1}$ (Theorem~\ref{thm:snp}).
 The technique that allows to generalize K{\'a}rolyi's  and Szab\'{o}'s method
 is the coordinatization of nilpotent algebras of prime power order
 by elementary abelian
 groups from \cite[Theorem~4.2]{Ai:BTFS}.
 The method generalizes to systems of equations:
 we show for a given finite supernilpotent
 algebra $\ab{A}$  in a congruence modular variety, and a given $s \in \N_0$,
 there is a polynomial time algorithm to test whether a system of at most $s$
 polynomial equations over $\ab{A}$  has a solution.
 If $s$ is not fixed in advance, then \cite[Corollary~3.13]{LZ:TTCS}
 implies that if $\ab{A}$ is not abelian,
 ${\textsc{PolSysSat} (\ab{A})}$ is $\NP$-complete.
 
 Let us finally explain to which class of algebras our results applies:
 A finite algebra $\ab{A}$ from a congruence modular variety
 with finitely many fundamental operations is supernilpotent
 if and only if it is a direct product of nilpotent algebras
 of prime power order; modulo notational differences
 explained, e.g., in \cite[Lemma~2.4]{Ai:BTFS},
 this result has been proved in
 \cite[Theorem~3.14]{Ke:CMVW}.
 Such an algebra is therefore always nilpotent,
 has a Mal'cev term (cf. 
 \cite[Theorem~6.2]{FM:CTFC}, \cite[Theorem~2.7]{Ke:CMVW}),
 and hence generates a congruence permutable variety.
 For a more detailed introduction to supernilpotency
 and, for $k \in \N$, to $k$-supernilpotency, we refer
 to \cite{AM:SAOH, AMO:COTR, Ai:BTFS}.
 \section{A theorem of K{\'a}rolyi and Szab\'{o}}
 In this section, we state a special case of \cite[Theorem~3.1]{KS:EOPO}.
 Since their result is much more general than needed for our purpose, we
 also include a self-contained proof, which is a reduction
 K{\'a}rolyi's and Szab\'{o}'s proof to the case of elementary abelian groups.

For $n \in \N = \{1,2,3,\ldots\}$, we denote the set $\{1,2,\ldots, n\}$ by $\uli{n}$.
Let $A$ be a set with an element $0 \in A$, and let $J \subseteq \uli{n}$.
For $\vb{a} \in A^n$, $\vbr{a}{J}$ is defined by $\vbr{a}{J} \in A^n$, $\vbr{a}{J} (j) = \vb{a} (j)$ for
$j \in J$ and $\vbr{a}{J} (j) = 0$ for $i \in \uli{n} \setminus J$.
Suppose that $1$ is an element of $A$. Then by $\vb{1}$, we denote the vector
$(1,1,\ldots,1)$ in $A^n$, and for $J \subseteq \uli{n}$, $\vbr{1}{J}$ is the
vector $(v_1, \ldots, v_n)$ with $v_j = 1$ if $j \in J$ and $v_j = 0$ if $j \not\in J$.
For any sets $C,D$, we write $C \subset D$ for ($C \subseteq D$ and $C \neq D$).

We first need the following variation of \cite[Theorem~1]{Br:CTWR} and
\cite[Theorem~3.2]{KS:EOPO}, which is proved
using several arguments from the proof of \cite[Theorem~3.1]{Al:CN} and
from \cite{Br:CTWR}.
\begin{lemma} \label{lem:polies}
  Let $F$ be a finite field, let $k, m, n \in \N$,  let $q := |F|$,
  let $p_1,\ldots, p_m \in F[x_1,\ldots, x_n]$ be polynomials
  such that for each $i \in \uli{m}$, 
  each monomial of $p_i$ contains at most $k$ variables.
  Then there exists $J \subseteq \uli{n}$ such that
  $|J| \le k m (q-1)$ and $p_i (\vbr{1}{J}) = p_i (\vb{1})$
  for all $i \in \uli{m}$.
\end{lemma}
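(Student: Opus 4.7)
The plan is to encode the condition ``$p_i(\vb{v}) = p_i(\vb{1})$ for all $i$'' as a single polynomial identity via Fermat's little theorem, and then to apply a low-degree vanishing argument on the Boolean cube in the spirit of Alon's Combinatorial Nullstellensatz.

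First I would introduce the indicator polynomial
\[
  G(\vb{x}) := \prod_{i=1}^m \bigl(1 - (p_i(\vb{x}) - p_i(\vb{1}))^{q-1}\bigr) \in F[x_1,\ldots,x_n].
\]
Since $a^{q-1}$ equals $1$ for $a \in F \setminus \{0\}$ and equals $0$ for $a=0$, the value $G(\vb{v})$ is $1$ precisely when $p_i(\vb{v}) = p_i(\vb{1})$ for every $i \in \uli{m}$ and is $0$ otherwise; in particular $G(\vb{1}) = 1$. I would then bound the ``variable width'' of $G$: each monomial of $p_i - p_i(\vb{1})$ still involves at most $k$ distinct variables, hence a monomial of $(p_i - p_i(\vb{1}))^{q-1}$ (a product of $q-1$ such) involves at most $k(q-1)$ distinct variables, and taking the product over $i \in \uli{m}$ shows that every monomial in the expansion of $G$ involves at most $km(q-1)$ distinct variables. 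Reducing modulo $x_j^2 - x_j$ for each $j$ does not change values on $\{0,1\}^n$ and cannot enlarge the set of variables occurring in any monomial, so I obtain a multilinear polynomial $\bar G$ of total degree at most $km(q-1)$ that agrees with $G$ on $\{0,1\}^n$.

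The core step is the elementary claim that if a multilinear polynomial $h = \sum_{M \subseteq \uli{n}} c_M \prod_{j \in M} x_j$ over $F$ of total degree at most $d$ satisfies $h(\vbr{1}{J}) = 0$ for every $J \subseteq \uli{n}$ with $|J| \le d$, then $h \equiv 0$. This follows by a direct induction on $|M|$ from the identity $h(\vbr{1}{M}) = \sum_{M' \subseteq M} c_{M'}$ (M\"obius inversion on the Boolean lattice): each $c_M$ with $|M| \le d$ is isolated from $h(\vbr{1}{M})$ together with the coefficients $c_{M'}$, $M' \subsetneq M$, which vanish by the inductive hypothesis, while $c_M$ vanishes for $|M| > d$ by the degree bound.

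Applying the contrapositive to $\bar G$ with $d = km(q-1)$, and using $\bar G(\vb{1}) = G(\vb{1}) = 1 \neq 0$, I obtain $J \subseteq \uli{n}$ with $|J| \le km(q-1)$ and $\bar G(\vbr{1}{J}) \neq 0$. Since $G$ takes only the values $0$ and $1$ on $\{0,1\}^n$, this forces $G(\vbr{1}{J}) = 1$, hence $p_i(\vbr{1}{J}) = p_i(\vb{1})$ for every $i$, as required. The only step demanding genuine care is the degree bookkeeping: one needs to check that the hypothesis ``at most $k$ variables per monomial'' propagates correctly through the $(q-1)$-st power of each $p_i - p_i(\vb{1})$ and then through the $m$-fold product, delivering exactly the bound $km(q-1)$. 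Everything else reduces to Fermat's little theorem and M\"obius inversion on the Boolean lattice.
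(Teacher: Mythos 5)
Your proof is correct, and it reaches the conclusion by a genuinely different route from the paper's. Both arguments start identically: the indicator polynomial $\prod_{i=1}^m \bigl(1-(p_i(\vb{x})-p_i(\vb{1}))^{q-1}\bigr)$ and the bookkeeping showing every monomial of it involves at most $km(q-1)$ distinct variables (your accounting of this is right, and it is indeed the step that produces the stated bound). From there the paper argues by induction on $n$ and by contradiction: if no proper subset $J_1\subset\uli{n}$ works, then $x_1\cdots x_n - q_1$ vanishes on $\{0,1\}^n$, the Combinatorial Nullstellensatz places it in the ideal generated by the $x_j^2-x_j$, and a Gr\"obner-basis reduction argument forces $q_1$ to contain a monomial in all $n$ variables, contradicting the width bound; one then restricts to $J_1$ and recurses. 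You instead multilinearize directly (which preserves values on $\{0,1\}^n$ and the variable sets of monomials) and invoke the elementary fact, proved by M\"obius inversion via $h(\vbr{1}{M})=\sum_{M'\subseteq M}c_{M'}$, that a nonzero multilinear polynomial of total degree at most $d$ cannot vanish at every $0$--$1$ vector of weight at most $d$; since $\bar G(\vb{1})=1$, this yields the desired low-weight $J$ in one shot. Your route dispenses with both the Nullstellensatz and the Gr\"obner machinery and avoids the nested induction, at the cost of proving the interpolation lemma by hand; the paper's version stays closer to the Alon--Brink template it cites. Both yield exactly the bound $|J|\le km(q-1)$.
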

\begin{proof}
We proceed by induction on $n$. If $n \le km(q-1)$, then
we take $J := \uli{n}$.
For the induction step, we assume that $n > km(q-1)$.

We first produce a set $J_1 \subset \uli{n}$ such that
$p_i (\vbr{1}{J_1}) = p_i (\vb{1})$ for all $i \in \uli{m}$.
Seeking a contradiction, we suppose that no such $J_1$
exists. Following an idea from the proof of \cite[Theorem~3.1]{Al:CN},
we consider the polynomials
\[
\begin{array}{rcl}
    q_1 (x_1,\ldots,x_n) & := &  \prod_{i=1}^m (
    1 - (p_i (\vb{x}) - p_i (\vb{1}))^{q-1}
                                         ), \\ 
    q_2(x_1,\ldots,x_n) & := &  x_1 x_2 \cdots x_n - q_1 (x_1,\ldots,x_n).
\end{array}
\]
We first show that for all $\vb{a} \in \{0,1\}^n$, $q_2( \vb{a} ) = 0$.
To this end, we first consider the case $\vb{a} = \vb{1}$.
Then
\(
q_2 (\vb{a}) = 1 - \prod_{i=1}^m 1 = 0.
\)
If $\vb{a} \in \{0,1\}^n \setminus \{\vb{1}\}$, then
by the assumptions, there is $i \in \uli{m}$
such that $p_i (\vb{a}) \neq p_i (\vb{1})$.
Then $1 - (p_i (\vb{a}) - p_i (\vb{1}))^{q-1} = 0$.
Therefore $q_2 (\vb{a}) = 0$.
Hence the polynomial $q_2$ vanishes at $\{0,1\}^n$. By the Combinatorial
Nullstellensatz \cite[Theorem~1.1]{Al:CN} applied to $g_j (x_j) := x_j^2 - x_j$,
$q_2$
then lies in the ideal $V$ of $F[x_1, \ldots, x_n]$ generated by
$G = \{x_j^2 - x_j \mid j \in \uli{n}\}$. Hence
$x_1x_2 \cdots x_n - q_1 (x_1, \ldots, x_n) \in V$. Since
the leading monomials of the polynomials in $G$ are coprime,
$G$ is a Gr\"obner basis of $V$
(with respect to $x_1 > x_2 > \dots > x_n$, lexicographic order, cf. \cite[p.337]{Ei:CA}).
Therefore,
reducing $q_1 (x_1,\ldots,x_n)$ modulo $G$, we must obtain $x_1x_2\cdots x_n$ as
the remainder (as defined, e.g., in \cite[p.334]{Ei:CA}).
Because of the form of all polynomials in $G$
(all variables of $g_j$ occur in the leading term of $g_j$),
none of the reduction steps
increases the number of variables in any monomial. Therefore, 
$q_1 (x_1, \ldots, x_n)$ must contain a monomial that contains all $n$ variables.
Computing the expansion of $q_1$ by multiplying out all products from
its definition, we see
that each monomial in $q_1$ contains at most $km(q-1)$ variables.
Hence $n \le km(q-1)$, which contradicts the assumption $n > km(q-1)$.
This contradiction shows that there is set $J_1 \subset \uli{n}$ such that
$p_i (\vbr{1}{J_1}) = p_i (\vb{1})$ for all $i \in \uli{m}$.
Now we let ${n'} := |J_1|$, and we assume that
$J_1 = \{j_1,\ldots, j_{n'}\}$ with $j_1 < \dots < j_{n'}$.
For $i \in \uli{m}$, we define $p'_i \in F[y_1, \ldots, y_{n'}]$ by
\[
p'_i (x_{j_1}, \ldots, x_{j_{n'}}) = p_i (\vbr{x}{J_1}).
\]
By the induction hypothesis, there exists $J_2 \subseteq \uli{{n'}}$ with
$|J_2| \le k m (q-1)$ such that
$p'_i (\vbr{1}{J_2}) = p'_i (\vb{1})$ for all $i \in \uli{m}$.
Now we define
$J := \{j_t \mid t \in J_2\}$. We have $J \subseteq J_1$, and therefore
$\vbr{1}{J} = \vbr{(\vbr{1}{\mathit{J}})}{J_1}$.
Then $p_i (\vbr{1}{J}) =
      p_i (\vbr{(\vbr{1}{\mathit{J}})}{J_1}) =
      p'_i ( \vbr{1}{J} (j_1), \ldots, \vbr{1}{J} (j_{n'}))
      = p'_i ( \vbr{1}{J_2} ) = p'_i (\vb{1}) =
      p_i (\vbr{1}{J_1}) = p_i (\vb{1})$, which completes the induction step.  \end{proof}

      We will need the following special case of \cite[Theorem~3.1]{KS:EOPO}. 
Let $\Pot_k (\uli{n})$ denote the set
 $\{I \subseteq \uli{n} : |I| \le k \}$ of subsets of $\uli{n}$ with at most $k$ elements.
 \begin{theorem}[cf. {\cite[Theorem~3.1]{KS:EOPO}}] \label{thm:ks}
    Let $n \in \N$, let $k \in \N_0$, let $p$ be a prime,
    and let $m \in \N$.
    Let $\varphi : \Pot_k (\uli{n}) \to \Z_p^m$. Then there is
    $U \subseteq \uli{n}$ with $|U| \le km(p-1)$ such that
    \[
    \sum_{J \in \Pot_k(\uli{n})} \varphi (J) = \sum_{J \in \Pot_k (U)} \varphi (J).
    \]
\end{theorem}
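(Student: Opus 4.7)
The plan is to reduce Theorem~\ref{thm:ks} directly to Lemma~\ref{lem:polies} by encoding $\varphi$ as $m$ polynomials over $\Z_p$, one for each coordinate of the codomain. Concretely, for each $t \in \uli{m}$, let $\varphi_t(J) \in \Z_p$ denote the $t$-th coordinate of $\varphi(J)$, and define
\[
 p_t(x_1, \ldots, x_n) := \sum_{J \in \Pot_k(\uli{n})} \varphi_t(J) \prod_{j \in J} x_j \;\in\; \Z_p[x_1, \ldots, x_n].
\]
Since $|J| \le k$ for every $J$ contributing a monomial, each monomial of $p_t$ involves at most $k$ variables, which is exactly the hypothesis of Lemma~\ref{lem:polies}.

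The key observation is that for any $U \subseteq \uli{n}$, the product $\prod_{j \in J} \vbr{1}{U}(j)$ equals $1$ if $J \subseteq U$ and $0$ otherwise, so
\[
 p_t(\vbr{1}{U}) = \sum_{J \in \Pot_k(U)} \varphi_t(J), \qquad p_t(\vb{1}) = \sum_{J \in \Pot_k(\uli{n})} \varphi_t(J).
\]
Applying Lemma~\ref{lem:polies} with $F = \Z_p$ (so $q = p$) to the family $p_1, \ldots, p_m$ then produces a set $U \subseteq \uli{n}$ with $|U| \le km(p-1)$ satisfying $p_t(\vbr{1}{U}) = p_t(\vb{1})$ for every $t \in \uli{m}$. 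Reassembling these $m$ coordinate equalities into a single identity in $\Z_p^m$ yields the desired conclusion.

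The degenerate case $k = 0$, for which Lemma~\ref{lem:polies} is not formally stated, is handled separately: then $\Pot_0(\uli{n}) = \{\emptyset\}$ and the bound $|U| \le 0$ forces $U = \emptyset$, so both sides reduce to $\varphi(\emptyset)$. Beyond this bookkeeping step, I do not foresee a real obstacle, since Lemma~\ref{lem:polies} carries all the combinatorial content. The only point requiring care is verifying that the multilinear encoding above genuinely translates the set-indexed sums of Theorem~\ref{thm:ks} into polynomial evaluations at characteristic vectors $\vbr{1}{U}$; once this is in place, the theorem is immediate.
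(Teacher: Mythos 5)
Your proposal is correct and is essentially identical to the paper's own proof: the paper also encodes each coordinate of $\varphi$ as a multilinear polynomial $f_i(x_1,\ldots,x_n) = \sum_{J \in \Pot_k(\uli{n})} (\varphi(J))_i \prod_{j \in J} x_j$ and applies Lemma~\ref{lem:polies} to the family $f_1,\ldots,f_m$ over $\Z_p$. Your separate treatment of the degenerate case $k=0$ is a reasonable extra bookkeeping remark but does not change the argument.
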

 \begin{proof}
We denote the vector $\varphi (J)$ by $( (\varphi (J))_1, \ldots, (\varphi (J))_m)$, and
we define $m$ polynomial functions $f_1, \ldots, f_m \in \Z_p[x_1, \ldots, x_n]$
by
\[
f_i (x_1, \ldots x_n) := \sum_{J \in \Pot_k (\uli{n})} \big( (\varphi (J))_i \cdot \prod_{j \in J} x_j \big).
\]
for $i \in \uli{m}$.
By Lemma~\ref{lem:polies}, there is a subset  $U$ of $\uli{n}$
with $|U| \le k m (p-1)$ such that
for all $i \in \uli{m}$, we have
$f_i (\vb{1}) = f_i (\vbr{1}{U})$. Hence
$\sum_{J \in \Pot_k (\uli{n})} (\varphi (J))_i =
f_i (\vb{1}) = f_i (\vbr{1}{U}) =  \sum_{J \in \Pot_k (\uli{n}), J \subseteq U} (\varphi (J))_i
 = \sum_{J \in \Pot_k (U)} (\varphi (J))_i$.  \end{proof}

 \section{Absorbing components}

Let $A$ be a set, let $0_A$ be an element of $A$, let $\ab{B} = \algop{B}{+,-,0}$
be an abelian group, let $n \in \N$, let $f : A^n \to B$, and let $I \subseteq \uli{n}$.
By $\Dep (f)$ we denote the set $\{ i \in \uli{n} \mid f \text{ depends on its $i$\,th argument}\}$.
We say that $f$ is \emph{absorbing in its $j$\,th argument} if for all $\vb{a}= (\vb{a}(1), \ldots, \vb{a}(n)) \in A^n$
with $\vb{a} (j) = 0_A$ we have $f(\vb{a}) = 0$. In the sequel, we will denote $0_A$ simply by
$0$.
We say that $f$ is \emph{absorbing in $I$} if $\Dep (f) \subseteq I$ and for every
$i \in I$, $f$ is absorbing in its $i$\,th argument.
\begin{lemma} \label{lem:mondec}
  Let $A$ be a set, let $0$ be an element of $A$, let $\ab{B} = \algop{B}{+,-,0}$
  be an abelian group, let $n \in \N$,  and
  let $f : A^n \to B$. Then there is exactly one sequence $(f_I)_{I \subseteq \uli{n}}$ of functions from $A^n$ to $B$  such that
  for each $I \subseteq \uli{n}$, $f_I$ is absorbing in $I$ and $f = \sum_{I \subseteq \uli{n}} f_I$.
  Furthermore, each function $f_I$ lies in the subgroup $\ab{F}$ of $\ab{B}^{A^n}$ that is
  generated by the functions $\vb{x} \mapsto f (\vbr{x}{I})$, where
  $I \subseteq \uli{n}$.
\end{lemma}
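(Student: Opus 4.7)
The plan is to define the $f_I$ explicitly by a M\"obius-inversion formula on the Boolean lattice $2^{\uli{n}}$, read off the subgroup membership and the absorption properties directly from that formula, derive $f = \sum_I f_I$ by a standard sign-flip, and then extract uniqueness from a triangular system obtained by restricting $f$ to coordinate subsets.

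First I would set, for each $I \subseteq \uli{n}$,
\[
  f_I(\vb{x}) := \sum_{J \subseteq I} (-1)^{|I \setminus J|}\, f(\vbr{x}{J}).
\]
This is a $\Z$-linear combination of the generators $\vb{x} \mapsto f(\vbr{x}{J})$, so $f_I \in \ab{F}$. Because every summand $\vb{x} \mapsto f(\vbr{x}{J})$ with $J \subseteq I$ ignores the coordinates outside $I$, I get $\Dep(f_I) \subseteq I$. For absorption in a fixed $i \in I$, given $\vb{a}$ with $\vb{a}(i) = 0$ I would pair each $J \subseteq I$ with $J' := J \triangle \{i\}$: since $\vb{a}(i) = 0$, one checks that $\vbr{a}{J} = \vbr{a}{J'}$, whereas $|I \setminus J|$ and $|I \setminus J'|$ differ by one, so the two terms appear with opposite signs and cancel. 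Hence $f_I(\vb{a}) = 0$.

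Next I would verify $f = \sum_I f_I$ by swapping the order of summation:
\[
  \sum_{I \subseteq \uli{n}} f_I(\vb{x}) \;=\; \sum_{J \subseteq \uli{n}} f(\vbr{x}{J}) \sum_{J \subseteq I \subseteq \uli{n}} (-1)^{|I \setminus J|}.
\]
The substitution $K := I \setminus J$ turns the inner sum into $\sum_{K \subseteq \uli{n} \setminus J} (-1)^{|K|}$, which is $0$ unless $J = \uli{n}$; in that remaining case $\vbr{x}{J} = \vb{x}$, so the total equals $f(\vb{x})$.

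For uniqueness, suppose $(g_I)_I$ is any family with the required properties. If $I \not\subseteq K$, pick $i \in I \setminus K$; then $\vbr{x}{K}(i) = 0$ and absorption forces $g_I(\vbr{x}{K}) = 0$, while for $I \subseteq K$ we have $\Dep(g_I) \subseteq I \subseteq K$, so $g_I(\vbr{x}{K}) = g_I(\vb{x})$. Therefore $f(\vbr{x}{K}) = \sum_{I \subseteq K} g_I(\vb{x})$ for every $K \subseteq \uli{n}$; this is a system that is lower-triangular with respect to $\subseteq$, whose unique M\"obius inversion over the abelian group $\ab{B}$ is exactly the formula displayed above, so $g_I = f_I$ for all $I$. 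The only mildly delicate point is the $J \leftrightarrow J \triangle \{i\}$ cancellation in the absorption step; everything else is standard inclusion--exclusion on $2^{\uli{n}}$.
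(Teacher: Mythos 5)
Your proof is correct, and the decomposition you construct is necessarily the same one as in the paper (uniqueness forces this); in fact the paper states your closed formula $f_I(\vb{a}) = \sum_{J \subseteq I} (-1)^{|I|+|J|} f(\vbr{a}{J})$ as a remark immediately after its proof, but does not use it as the definition. Where you differ is in the organization of the argument. The paper defines $f_I$ by recursion on $|I|$ via $f_I(\vb{a}) = f(\vbr{a}{I}) - \sum_{J \subset I} f_J(\vb{a})$ and then establishes absorption, the sum identity, and uniqueness by three separate inductions on $|I|$; the absorption step in particular requires a somewhat delicate manipulation identifying the tail of the sum with $f_{I\setminus\{i\}}$. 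You instead take the M\"obius-inversion formula as the definition, which makes membership in $\ab{F}$ and $\Dep(f_I) \subseteq I$ immediate, reduces absorption to the fixed-point-free involution $J \mapsto J \triangle \{i\}$ on $2^I$ (valid since $\vb{a}(i)=0$ gives $\vbr{a}{J} = \vbr{a}{J\triangle\{i\}}$ while the signs flip), and reduces both the sum identity and uniqueness to the standard fact that $\sum_{K \subseteq S} (-1)^{|K|}$ vanishes for $S \neq \emptyset$ --- all of which works over an arbitrary abelian group because the coefficients are $\pm 1$. Your uniqueness argument via the triangular system $f(\vbr{x}{K}) = \sum_{I \subseteq K} g_I(\vb{x})$ is clean and complete (you correctly use both halves of the definition of ``absorbing in $I$'': the dependence condition for $I \subseteq K$ and the vanishing condition for $I \not\subseteq K$). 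The trade-off is purely expository: the paper's recursive route avoids any sign bookkeeping, while yours replaces three inductions by two short inclusion--exclusion computations.
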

\begin{proof} We first prove the existence of such a sequence.
         To this end, we define $f_I$ by recursion on $|I|$.
         We define $f_{\emptyset} (\vb{a}) := f (0,\ldots, 0)$ and for $I \neq \emptyset$,
         we let
         \[
            f_I (\vb{a}) := f (\vbr{a}{I}) - \sum_{J \subset I} f_J (\vb{a}).
         \]
         By induction on $|I|$, we see that $\Dep (f_I) \subseteq I$ and
         that $f_I$ lies in the subgroup $\ab{F}$.
         We will
         now show that each $f_I$ is absorbing in $I$, and we again proceed
         by induction on $|I|$. Let $i \in I$, and let $\vb{a} \in A^n$ be
         such that $\vb{a} (i) = 0$. We have to show $f_I (\vb{a}) = 0$. We compute
         $f_I (\vb{a}) = f (\vbr{a}{I}) - \sum_{J \subset I} f_J (\vb{a})$.
         By the induction hypothesis, we have $f_J (\vb{a}) = 0$ for those $J$ with
         $i \in J$. Hence $f (\vbr{a}{I}) - \sum_{J \subset I} f_J (\vb{a}) =
         f (\vbr{a}{I}) - \sum_{J \subseteq I \setminus \{i\}} f_J (\vb{a})$, and because
         of $\vbr{a}{I} = \vbr{a}{I \setminus \{i\}}$, this is equal to
         $f (\vbr{a}{I \setminus \{i\}}) - \sum_{J \subseteq I \setminus \{i\}} f_J (\vb{a}) =
         f (\vbr{a}{I \setminus \{i\}}) - \sum_{J \subset I \setminus \{i\}} f_J (\vb{a}) - f_{I \setminus \{i\}} (\vb{a})$.
         By the definition of $f_{I \setminus \{i\}}$, the last expression is equal to
         $f_{I \setminus \{i\}} (\vb{a}) - f_{I \setminus \{i\}} (\vb{a}) = 0$. This completes the induction
          proof; hence each $f_I$ is absorbing in $I$.
         In order to show $f = \sum_{I \subseteq \uli{n}} f_I$, we choose $\vb{a} \in A^n$ and
         compute $\sum_{I \subseteq \uli{n}} f_I (\vb{a}) = f_{\uli{n}} (\vb{a}) + \sum_{I \subset \uli{n}} f_I (\vb{a})
         = f (\vbr{a}{\uli{n}}) - \sum_{J \subset \uli{n}} f_J (\vb{a}) + \sum_{I \subset \uli{n}} f_I (\vb{a})
         = f (\vb{a})$.
         This completes the proof of the existence of such a sequence.

         For the uniqueness, assume that $f = \sum_{I \subseteq \uli{n}} f_I = \sum_{I \subseteq \uli{n}} g_I$ and
           that for all $I$, $f_I$ and $g_I$  are absorbing in $I$. We show by induction on $|I|$ that
           $f_I = g_I$. Let $I := \emptyset$.
           First we notice that $f (0, \ldots, 0) = \sum_{J \subseteq \uli{n}} f_J (0,\ldots, 0) =
           \sum_{J \subseteq \uli{n}} g_J (0,\ldots, 0)$.
            Since $f_J$ and $g_J$ are absorbing, the summands with $J \neq \emptyset$ are $0$,
           and thus $f_{\emptyset} (0, \ldots, 0) =  \sum_{J \subseteq \uli{n}} f_J (0,\ldots, 0) =
           f(0,\ldots,0) = \sum_{J \subseteq \uli{n}} g_J (0,\ldots, 0) = g_{\emptyset} (0,\ldots,0)$.
           Since both $f_{\emptyset}$ and $g_{\emptyset}$ are constant functions, they are equal.
           For the induction step, we assume $|I| \ge 1$.
           Let $\vb{a} \in A^n$.
           Then $\sum_{J \subseteq \uli{n}} f_J (\vbr{a}{I}) = \sum_{J \subseteq \uli{n}} g_J (\vbr{a}{I})$.
           Only the summands with $J \subseteq I$ can be nonzero, and therefore
           $\sum_{J \subseteq I} f_J (\vbr{a}{I}) = \sum_{J \subseteq I} g_J (\vbr{a}{I})$.
           By the induction hypothesis, $f_J = g_J$ for $J \subset I$. Therefore,
           $f_I (\vbr{a}{I}) = g_I (\vbr{a}{I})$. Since $f_I$ and $g_I$ depend only on the arguments
           at positions in $I$, we obtain $f_I (\vb{a}) = f_I (\vbr{a}{I}) = g_I (\vbr{a}{I})
           = g_I (\vb{a})$.
           Thus $f_I = g_I$.  \end{proof}

           Actually, the component $f_I$ can be computed by
           $f_I (\vb{a}) = \sum_{J \subseteq I} (-1)^{|I| + |J|} f(\vbr{a}{J})$.
           \begin{definition}
             Let $A$ be a set, let $0$ be an element of $A$, let $\ab{B} = \algop{B}{+,-,0}$
             be an abelian group, let $n \in \N$, let $f : A^n \to B$, and let $J \subseteq \uli{n}$.
             Then we call the sequence $(f_I)_{I \subseteq \uli{n}}$ such that
             for each $I \subseteq \uli{n}$, $f_I$ is absorbing in $I$, and $f = \sum_{I \subseteq \uli{n}} f_I$ the
             \emph{absorbing decomposition} of $f$, and $f_J$ the \emph{$J$-absorbing component} of $f$.
             We define the \emph{absorbing degree of $f$} by
             $\adeg (f) := \max \, (\{-1\} \cup \{ |J| :  J \subseteq \uli{n} \text{ and } f_J \neq 0 \})$. 
           \end{definition}

           \begin{theorem} \label{thm:redweight}
              Let $A$ be a set, let $0$ be an element of $A$,
              let $p$ be a prime, let $k \in \N_0$, let $n \in \N$,  and let  $f_1, \ldots, f_m : A^n \to \Z_p$.
              We assume that each $f_i$ is of absorbing degree at most $k$.
              Let $\vb{a} \in A^n$. Then there is $U$ with $|U| \le km(p-1)$ such that
              for all $i \in \uli{m}$, we have $f_i (\vb{a}) = f_i (\vbr{a}{U})$.
           \end{theorem}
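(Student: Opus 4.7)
The plan is to reduce to Theorem~\ref{thm:ks} by passing to the absorbing decomposition of each $f_i$ and evaluating at the fixed point $\vb{a}$.

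For each $i \in \uli{m}$, let $((f_i)_I)_{I \subseteq \uli{n}}$ be the absorbing decomposition of $f_i$ guaranteed by Lemma~\ref{lem:mondec}. Since $f_i$ has absorbing degree at most $k$, we have $(f_i)_I = 0$ whenever $|I| > k$, so $f_i = \sum_{I \in \Pot_k(\uli{n})} (f_i)_I$. The key evaluation lemma I will establish is that for \emph{any} $U \subseteq \uli{n}$ and any $I \subseteq \uli{n}$,
\[
(f_i)_I (\vbr{a}{U}) = \begin{cases} (f_i)_I (\vb{a}) & \text{if } I \subseteq U,\\ 0 & \text{otherwise.} \end{cases}
\]
The first case holds because $\Dep((f_i)_I) \subseteq I \subseteq U$, so $(f_i)_I$ does not see the coordinates where $\vbr{a}{U}$ and $\vb{a}$ differ. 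The second case holds because if $I \not\subseteq U$, picking $j \in I \setminus U$ gives $\vbr{a}{U}(j) = 0$, and $(f_i)_I$ is absorbing in its $j$-th argument.

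Summing over $I \in \Pot_k(\uli{n})$, this evaluation lemma yields
\[
f_i (\vbr{a}{U}) = \sum_{I \in \Pot_k(U)} (f_i)_I (\vb{a}) \qquad \text{and} \qquad f_i (\vb{a}) = \sum_{I \in \Pot_k(\uli{n})} (f_i)_I (\vb{a}).
\]
Thus the conclusion $f_i(\vb{a}) = f_i(\vbr{a}{U})$ for all $i \in \uli{m}$ is equivalent to
$\sum_{I \in \Pot_k(\uli{n})} (f_i)_I (\vb{a}) = \sum_{I \in \Pot_k(U)} (f_i)_I (\vb{a})$ for all $i$.

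Now I apply Theorem~\ref{thm:ks} to the map $\varphi : \Pot_k(\uli{n}) \to \Z_p^m$ defined by $\varphi(J) := ((f_1)_J (\vb{a}), \ldots, (f_m)_J (\vb{a}))$, which produces a set $U \subseteq \uli{n}$ with $|U| \le km(p-1)$ satisfying $\sum_{J \in \Pot_k(\uli{n})} \varphi(J) = \sum_{J \in \Pot_k(U)} \varphi(J)$. Reading off coordinates gives the desired equalities and completes the proof. The only real subtlety is the evaluation lemma above; everything else is packaging Theorem~\ref{thm:ks} to fit.
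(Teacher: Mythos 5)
Your proof is correct and follows essentially the same route as the paper: both define $\varphi(J) := ((f_1)_J(\vb{a}),\ldots,(f_m)_J(\vb{a}))$, invoke Theorem~\ref{thm:ks}, and then identify the two sums with $f_i(\vb{a})$ and $f_i(\vbr{a}{U})$ using exactly the absorbing and dependence properties you isolate in your evaluation lemma. Your explicit statement of that lemma is a clean packaging of the chain of equalities the paper writes out inline.
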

           \begin{proof}
            We define a function $\varphi : \Pot_k (\uli{n}) \to \Z_p^m$ by
            $\varphi (J) := ((f_1)_J (\vb{a}), \ldots, (f_m)_J (\vb{a}))$, where
            for $i \in \uli{m}$, $\bigl((f_i)_J\bigr)_{J \subseteq \uli{n}}$
            is the absorbing
            decomposition of $f_i$. 
             Then Theorem~\ref{thm:ks} yields a subset $U$ of $\uli{n}$ with
             $|U| \le  km(p-1)$ such that
             $\sum_{J \in \Pot_k (\uli{n})} \varphi (J) = \sum_{J \in \Pot_k (U)} \varphi (J)$.
             Since $(f_i)_J = 0$  for all $J$ with $|J| > k$,
             we have
          $\sum_{J \in \Pot_k (\uli{n})} \varphi (J) =
                \sum_{J \in \Pot_k (\uli{n})} ((f_1)_J (\vb{a}), \ldots, (f_m)_J (\vb{a})) \\ =
              \sum_{J \subseteq \uli{n}} ((f_1)_J (\vb{a}), \ldots, (f_m)_J (\vb{a}))
              = (f_1 (\vb{a}), \ldots, f_m (\vb{a}))$ and
              \begin{multline*}
             \sum_{J \in \Pot_k (U)} \varphi (J) = \sum_{J \in \Pot_k (U)} ((f_1)_J (\vb{a}), \ldots, (f_m)_J (\vb{a})) \\ = \sum_{J \subseteq U} ((f_1)_J (\vb{a}), \ldots, (f_m)_J (\vb{a}))
             =  \sum_{J \subseteq U} ((f_1)_J (\vbr{a}{U}), \ldots, (f_m)_J (\vbr{a}{U}))
            \\ =  \sum_{J \subseteq \uli{n}} ((f_1)_J (\vbr{a}{U}), \ldots, (f_m)_J (\vbr{a}{U}))
              =  (f_1 (\vbr{a}{U}), \ldots, f_m (\vbr{a}{U})).
              \end{multline*}
           \end{proof}
             
             \section{Polynomial mappings}

             In this section, we develop a property of polynomial mappings
             of finite supernilpotent algebras in congruence modular varieties.
             We call an algebra $\ab{A} = \algop{A}{+,-,0, (f_i)_{i \in S}}$ an
             \emph{expanded group} if its reduct $\ab{A}^+ = \algop{A}{+,-,0}$
             is a group, an \emph{expanded abelian group} if
             $\ab{A}^+$ is an abelian group, and an
             \emph{expanded elementary abelian group} if $\ab{A}^+$ is
             elementary abelian, meaning that $\ab{A}^+$  is abelian and
             all its nonzero elements have the same prime order.
              For an algebra $\ab{A}$ and $n,s \in \N$, we define the set $\Pol_{n,s} (\ab{A})$
              of
             polynomial maps from $\ab{A}^n$ to $\ab{A}^s$ as the set
             of all mappings $\vb{a} \mapsto (f_1 (\vb{a}), \ldots, f_s (\vb{a}))$ with
             $f_1, \ldots, f_s \in \Pol_n (\ab{A})$.

             \begin{lemma} \label{lem:snpabs}
               Let $k, n \in \N$, let $\ab{A}$ be a $k$-supernilpotent
               expanded abelian group, and let $f \in \Pol_n (\ab{A})$.
               Then $f$ is of
               absorbing degree at most $k$.
             \end{lemma}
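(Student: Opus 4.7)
The plan is to combine two facts: the explicit inclusion--exclusion formula for the absorbing components, and the characterization of $k$-supernilpotency of expanded abelian groups via a higher-arity term condition.

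First, recall from the remark following Lemma~\ref{lem:mondec} that
\[
f_I(\vb{a}) \;=\; \sum_{J \subseteq I} (-1)^{|I|+|J|}\, f(\vbr{a}{J}).
\]
Proving $\adeg(f) \le k$ amounts to showing that this alternating sum vanishes for every $I \subseteq \uli{n}$ with $|I| \ge k+1$ and every $\vb{a} \in A^n$.

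Next, I would reduce to the case $|I| = k+1$. If $|I| = m > k+1$, partition $I$ as $I_1 \cup I_2$ with $|I_1| = k+1$. Writing each $J \subseteq I$ as $J_1 \cup J_2$ with $J_i \subseteq I_i$ and factoring the double sum, the inner sum over $J_1 \subseteq I_1$ becomes, up to a global sign, the $I_1$-absorbing-component expression applied to the polynomial obtained from $f$ by freezing the coordinates in $I_2$ according to $J_2$ and $\vb{a}$. That frozen function still lies in $\Pol_n(\ab{A})$, so once the $|I|=k+1$ case is established, each inner sum vanishes and hence so does the full sum.

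The core of the proof is the $(k+1)$-ary case. For an expanded abelian group, $k$-supernilpotency is equivalent to the triviality of the $(k+1)$-ary higher commutator of the top congruence with itself, and, as developed in \cite{AM:SAOH} (cf.\ \cite{Ai:BTFS, AMO:COTR}), this translates into the term condition that for every polynomial $p \in \Pol_{k+1}(\ab{A})$ and every $\vb{a} \in A^{k+1}$,
\[
\sum_{J \subseteq \uli{k+1}} (-1)^{(k+1)+|J|}\, p(\vbr{a}{J}) \;=\; 0.
\]
Applying this identity to the polynomial obtained from $f$ by substituting $0$ in every coordinate outside $I$ gives $f_I(\vb{a}) = 0$ whenever $|I| = k+1$, and the reduction step above then completes the argument. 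The main anticipated obstacle is matching the signs and the base point $0$ of the absorbing-decomposition formula with the exact form of the higher-commutator term condition available in the literature; once the appropriate version is pinned down, both the reduction and its application are routine bookkeeping.
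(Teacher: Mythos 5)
Your proof is correct and rests on the same key fact as the paper's: in a $k$-supernilpotent expanded abelian group, every absorbing polynomial of arity exceeding $k$ vanishes --- and your alternating sum $\sum_{J \subseteq I}(-1)^{|I|+|J|}f(\vbr{a}{J})$ is exactly the $I$-absorbing component $f_I$, which by Lemma~\ref{lem:mondec} is an absorbing polynomial function of the expanded group, so the ``term condition'' you need to pin down is precisely the characterization the paper cites from \cite[Lemma~2.3]{Ai:BTFS}. Since that characterization applies to all arities $|I|>k$ at once, your reduction from $|I|>k+1$ to $|I|=k+1$ is valid but unnecessary.
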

             \begin{proof}
             Let $J \subseteq \uli{n}$ with $|J| > k$, and let
             $f_J$ be the $J$-absorbing component of $f$.
             Let $m := |J|$ and let $J = \{ i_1, \ldots, i_m \}$.
             Using Lemma~\ref{lem:mondec}, we obtain that the
             function $g : A^m \to A$ defined by
             $g (a_{i_1}, \ldots, a_{i_m}) := f_J (\vb{a})$ for $\ab{a} \in A^n$
             is an absorbing function in $\Pol_m (\ab{A})$. Hence
             \cite[Lemma~2.3]{Ai:BTFS} and the remark immediately
             preceding that Lemma yield that $g$ is the zero function.
             Thus $f_J = 0$. Hence the absorbing degree of $f$ is at most $k$.
              \end{proof}

             We first consider polynomial mappings of supernilpotent
             expanded elementary abelian groups of prime power order.
                \begin{theorem} \label{thm:sysp}
                  Let $k,n,s, \alpha \in \N$,
                  let $p \in \mathbb{P}$,
                  and let
                  $\ab{A}
                  $ be a $k$-su\-per\-nil\-potent
               expanded elementary abelian group of
               order $p^{\alpha}$. 
               Let $F = (f_1, \ldots, f_s) \in \Pol_{n,s} (\ab{A})$, and let $\vb{a} \in A^n$.
               Then there is $U \subseteq \uli{n}$ with $|U| \le k s \alpha (p-1)$ such that
               $F (\vb{a}) = F (\vbr{a}{U})$.
             \end{theorem}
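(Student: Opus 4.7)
The plan is to reduce to the setting of Theorem~\ref{thm:redweight} by coordinatizing the codomain. Since $\ab{A}^+$ is elementary abelian of order $p^{\alpha}$, it is isomorphic (as an abelian group) to $\Z_p^{\alpha}$. Fix such an isomorphism and let $\pi_1, \ldots, \pi_{\alpha} : A \to \Z_p$ be the corresponding group homomorphisms so that $a = 0$ iff $\pi_j(a) = 0$ for all $j$. For each $i \in \uli{s}$ and $j \in \uli{\alpha}$, set $f_{i,j} := \pi_j \circ f_i : A^n \to \Z_p$. We will apply Theorem~\ref{thm:redweight} to the family of $m := s\alpha$ functions $f_{i,j}$.

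For this to work, I need each $f_{i,j}$ to have absorbing degree at most $k$. The key observation is that the absorbing decomposition commutes with group homomorphisms. Concretely, by Lemma~\ref{lem:snpabs} each $f_i$ has absorbing degree at most $k$, so $f_i = \sum_{I \subseteq \uli{n}} (f_i)_I$ with $(f_i)_I = 0$ whenever $|I| > k$ and $(f_i)_I$ absorbing in $I$. Applying $\pi_j$ (which is a group homomorphism fixing $0$) yields $f_{i,j} = \sum_{I \subseteq \uli{n}} \pi_j \circ (f_i)_I$, and each $\pi_j \circ (f_i)_I$ is again absorbing in $I$. By the uniqueness part of Lemma~\ref{lem:mondec}, this is the absorbing decomposition of $f_{i,j}$, so $(f_{i,j})_I = \pi_j \circ (f_i)_I$ vanishes for $|I| > k$, i.e., $\adeg(f_{i,j}) \le k$.

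Now Theorem~\ref{thm:redweight}, applied with prime $p$, bound $k$, the family $(f_{i,j})_{i \in \uli{s}, j \in \uli{\alpha}}$ of $m = s\alpha$ functions from $A^n$ to $\Z_p$, and the given $\vb{a} \in A^n$, produces $U \subseteq \uli{n}$ with $|U| \le k\,s\alpha\,(p-1)$ such that $f_{i,j}(\vb{a}) = f_{i,j}(\vbr{a}{U})$ for all $i \in \uli{s}$, $j \in \uli{\alpha}$. Since the $\pi_j$ jointly separate points of $A$, this gives $f_i(\vb{a}) = f_i(\vbr{a}{U})$ for all $i$, hence $F(\vb{a}) = F(\vbr{a}{U})$.

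The only substantive step is the coordinate-wise preservation of the absorbing decomposition; this is conceptually easy, but it is the place where the hypothesis that $\ab{A}$ is \emph{elementary} abelian (so that the codomain decomposes as a product of copies of $\Z_p$, matching the scalar-valued setting of Theorem~\ref{thm:redweight}) is genuinely used, and it explains the factor $\alpha$ in the bound.
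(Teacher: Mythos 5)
Your proof is correct and follows essentially the same route as the paper: coordinatize the elementary abelian codomain as $\Z_p^{\alpha}$, observe that the $s\alpha$ scalar component functions have absorbing degree at most $k$, and invoke Theorem~\ref{thm:redweight}. Your explicit justification that composing with the projections $\pi_j$ preserves the absorbing decomposition (via the uniqueness in Lemma~\ref{lem:mondec}) is a point the paper passes over more quickly, so your write-up is, if anything, slightly more careful.
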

                \begin{proof}
                We let $\pi$ be a group isomorphism from $\algop{A}{+,-,0}$ to
                $\Z_p^{\alpha}$, and for $a \in A$, we denote
                $\pi (a)$ by $(\pi_1 (a), \ldots, \pi_{\alpha} (a))$.
             For each $r \in \uli{s}$ and each $\beta  \in \uli{\alpha}$,
             let $f_{r,\beta} : A^n \to \Z_p$ be defined by
             $f_{r, \beta} (\vb{a}) = \pi_{\beta} (f_r (\vb{a}))$; hence $f_{r, \beta} (\vb{a})$ 
             is the $\beta$\,th component of
             $f_r (\vb{a})$. Since $f_r \in \Pol_n (\ab{A})$ and
             $\ab{A}$ is $k$-supernilpotent, Lemma~\ref{lem:snpabs} implies that
             each of these $f_{r, \beta}$
             is of absorbing degree at most $k$.
             Setting $m := s \alpha$, Theorem~\ref{thm:redweight} yields
             $U$ with $|U| \le k s \alpha (p-1)$ such that $f_{r, \beta} (\vb{a}) = f_{r, \beta} (\vbr{a}{U})$
             for all $r \in \uli{s}$ and $\beta \in \uli{\alpha}$.
              Then clearly $F (\vb{a}) = F (\vbr{a}{U})$.  \end{proof}

               We apply this result to  polynomial mappings of \emph{direct products}
                    of finite supernilpotent
                    expanded elementary abelian groups.
              For a vector $\vb{a} \in A^n$, we call the number of its nonzero entries
              the \emph{weight} of $\ab{a}$; formally,
              \(
                    \wt (\vb{a}) :=  |\{ j \in \uli{n} : \vb{a} (j) \neq 0 \}|.
                    \)
            \begin{theorem} \label{thm:sysallg}
              Let $n,s,t, k_1, \ldots, k_t  \in \N$.
              For 
              each $i \in \uli{t}$, 
              let $\ab{B}_i$ a $k_i$-supernilpotent
              expanded elementary abelian group with $|\ab{B}_i| = p_i^{\alpha_i}$,
              where $p_i$ is a prime and $\alpha_i \in  \N$.
              Let $\ab{A} := \prod_{i=1}^t \ab{B}_i$, let
              $F \in \Pol_{n,s} (\ab{A})$, and let $\vb{a} \in A^n$.
               Then there is $\vb{y} \in A^n$ with
               \( \wt (\vb{y}) \le \sum_{i = 1}^t k_i s \alpha_i (p_i-1) \) such that
               $F (\vb{a}) = F (\vb{y})$.
              \end{theorem}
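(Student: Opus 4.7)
The plan is to reduce to Theorem~\ref{thm:sysp} by projecting $F$ onto each direct factor $\ab{B}_i$. The central observation is that polynomial functions respect direct product decompositions: if $f \in \Pol_n(\ab{A})$ is represented by a term with constants $c_1, \ldots, c_\ell \in A$, then applying the projection homomorphism $\pi_i \colon \ab{A} \to \ab{B}_i$ gives
\[
   \pi_i(f(\vb{x})) = f^{(i)}(\pi_i(\vb{x}(1)), \ldots, \pi_i(\vb{x}(n)))
\]
for some $f^{(i)} \in \Pol_n(\ab{B}_i)$, namely the function obtained by replacing each constant $c_j$ by its $i$\,th projection $\pi_i(c_j)$. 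Applying this componentwise to $F = (f_1, \ldots, f_s)$, I obtain for each $i \in \uli{t}$ a polynomial map $F^{(i)} := (f_1^{(i)}, \ldots, f_s^{(i)}) \in \Pol_{n,s}(\ab{B}_i)$ such that $\pi_i \circ F = F^{(i)} \circ \pi_i^{\ast}$, where $\pi_i^{\ast}$ denotes the componentwise application of $\pi_i$ to a tuple in $A^n$.

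Next, for each $i \in \uli{t}$ I would apply Theorem~\ref{thm:sysp} to the $k_i$-supernilpotent expanded elementary abelian group $\ab{B}_i$, the polynomial map $F^{(i)}$, and the vector $\pi_i^{\ast}(\vb{a}) \in B_i^n$. This yields a set $U_i \subseteq \uli{n}$ with $|U_i| \le k_i s \alpha_i (p_i - 1)$ such that $F^{(i)}(\pi_i^{\ast}(\vb{a})) = F^{(i)}(\vbr{(\pi_i^{\ast}(\vb{a}))}{U_i})$. Now I construct $\vb{y} \in A^n$ coordinate-by-coordinate: for $j \in \uli{n}$, define $\vb{y}(j)$ to be the unique element of $A$ whose $i$\,th projection equals $\pi_i(\vb{a}(j))$ if $j \in U_i$ and equals $0$ otherwise. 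By construction, $\pi_i^{\ast}(\vb{y}) = \vbr{(\pi_i^{\ast}(\vb{a}))}{U_i}$ for every $i$, so
\[
   \pi_i(F(\vb{y})) = F^{(i)}(\pi_i^{\ast}(\vb{y})) = F^{(i)}(\vbr{(\pi_i^{\ast}(\vb{a}))}{U_i}) = F^{(i)}(\pi_i^{\ast}(\vb{a})) = \pi_i(F(\vb{a})),
\]
and since this holds in every factor, $F(\vb{y}) = F(\vb{a})$.

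Finally, for the weight bound I observe that $\vb{y}(j) \neq 0_A$ forces $\pi_i(\vb{y}(j)) \neq 0$ for at least one $i$, and by the definition of $\vb{y}(j)$ this requires $j \in U_i$ for that $i$. Hence $\{j \in \uli{n} : \vb{y}(j) \neq 0_A\} \subseteq \bigcup_{i=1}^t U_i$, which gives
\[
   \wt(\vb{y}) \le \sum_{i=1}^t |U_i| \le \sum_{i=1}^t k_i s \alpha_i (p_i - 1),
\]
as required. The only genuinely nontrivial point is the initial claim that polynomial maps of $\ab{A}$ decompose componentwise as polynomial maps of the factors $\ab{B}_i$; this is standard but should be stated explicitly, since the constants appearing in the terms defining $F$ must themselves be projected to yield polynomial functions on each $\ab{B}_i$. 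Everything else is bookkeeping.
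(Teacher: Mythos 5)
Your proof is correct and follows essentially the same route as the paper: the paper works with the projection kernels $\nu_i$ and the induced polynomial maps $F^{\ab{A}/\nu_i}$ on the quotients $\ab{A}/\nu_i \cong \ab{B}_i$, which is exactly your componentwise decomposition via $\pi_i$, and it assembles $\vb{y}$ coordinatewise from the lifted truncations in the same way. The point you flag as needing explicit justification (that projecting the constants yields a polynomial map on each factor) is handled in the paper simply by passing to the quotient algebra, so there is no substantive difference.
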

            \begin{proof}
              For $i \in \uli{t}$, let
              $\nu_i$ be the $i$\,th projection kernel.
              Applying Theorem~\ref{thm:sysp} to
              $\ab{A}/\nu_i$, which is isomorphic to $\ab{B}_i$,  and
              $\vb{b} := \vb{a}/\nu_i$, we obtain
              $U_i \subseteq \uli{n}$ with $|U_i| \le k_i s \alpha_i (p_i - 1)$ such that
              $F^{\ab{A} / \nu_i} (\vbr{b}{U_i}) = F^{\ab{A}/\nu_i} (\vb{b})$.
              Lifting $\vbr{b}{U_i}$ to $A$, we obtain $(x_{i,1}, \ldots, x_{i, n}) \in A^n$ such that
              $(x_{i,1}, \ldots, x_{i, n}) / \nu_i = \vbr{b}{U_i}$ and
              $x_{i,j} = 0$ for $j \in \uli{n} \setminus U_i$.
              Now for every $j \in \uli{n}$, we define $y_j \in A$
              by the equations
              \[
              y_j \equiv_{\nu_i} x_{i, j} \text{ for all } i \in \uli{t}.
              \]
              For each $i \in \uli{t}$, we have $F (y_1, \ldots, y_n) / {\nu_i}
              = F^{\ab{A}/\nu_i} ( x_{i,1} / \nu_i, \ldots, x_{i,n} / \nu_i ) =
              F^{\ab{A}/\nu_i} ( \vbr{b}{U_i} ) =
              F^{\ab{A}/\nu_i} ( \vb{b} ) =
              F^{\ab{A}/\nu_i} ( \vb{a}/\nu_i ) = F (\vb{a}) / \nu_i$.
              Hence $F (\vb{y}) = F (\vb{a})$.
                  For $j \in \uli{n} \setminus (U_1 \cup \dots \cup U_t)$,
                  and for all $i \in \uli{t}$, we have $x_{i,j} = 0$, and therefore
                  $y_j = 0$. Hence the number of nonzero entries in $\vb{y}$ is
                  at most $\sum_{i=1}^t |U_i| = \sum_{i = 1}^t k_i s \alpha_i (p_i-1)$.  \end{proof}

                  Now we consider \emph{arbitrary} finite supernilpotent algebras
                  in congruence modular varieties. In these algebras, 
                  we can introduce group operations preserving nilpotency using \cite{Ai:BTFS}.
                     \begin{lemma} \label{lem:exexp}
                      Let $\mu \in \N$, 
          let $\ab{A} = \algop{A}{(f_i)_{i \in S}}$ be a finite supernilpotent algebra in a congruence
          modular variety all of whose fundamental operations have arity at most $\mu$, and let $z \in A$.
          Let $t \in \N_0$, let $p_1, \ldots, p_t$ be different primes, and  let $\alpha_1, \ldots, \alpha_t \in \N$ such that $|\ab{A}| = \prod_{i=1}^t p_i^{\alpha_i}$.
          For $i \in \uli{t}$, let
          \(
          k_i := (\mu (p_i^{\alpha_i} - 1))^{\alpha_i - 1}.
          \)
          Then there are operations $+$ (binary), $-$ (unary), $0$ (nullary) on $A$ such that
          $\ab{A'} = \algop{A}{+,-,0, (f_i)_{i \in S}}$ 
          is isomorphic to
          a direct product
          $\prod_{i=1}^t \ab{B}_i'$, where each $\ab{B}_i'$ is a
          $k_i$-supernilpotent expanded elementary abelian group,
          and $0^{\ab{A'}} = z$.
                  \end{lemma}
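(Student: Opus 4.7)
The plan is to combine the structural decomposition of finite supernilpotent algebras in congruence modular varieties with the coordinatization result \cite[Theorem~4.2]{Ai:BTFS} already cited in the introduction. First, by the structure theorem recalled just before Section~2 (\cite[Theorem~3.14]{Ke:CMVW} together with the notational translation in \cite[Lemma~2.4]{Ai:BTFS}), $\ab{A}$ is isomorphic to a direct product $\prod_{i=1}^t \ab{B}_i$ of nilpotent algebras, each in a congruence modular variety, with $|B_i| = p_i^{\alpha_i}$. I would fix such an isomorphism $\varphi : \ab{A} \to \prod_{i=1}^t \ab{B}_i$ and write $\varphi(z) = (z_1, \ldots, z_t)$.

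Second, I would apply \cite[Theorem~4.2]{Ai:BTFS} to each $\ab{B}_i$, taking $z_i$ as the distinguished element that is to become the new group identity. This produces operations $+_i, -_i, 0_i = z_i$ on $B_i$ for which the expansion $\ab{B}_i'$, obtained by adding these three symbols to the signature inherited by $\ab{B}_i$ from $\ab{A}$ via $\varphi$, is an expanded elementary abelian group of order $p_i^{\alpha_i}$. The new group operations are polynomial in $\ab{B}_i$, so enlarging the clone by these operations preserves nilpotency, and $\ab{B}_i'$ is therefore supernilpotent as an expanded elementary abelian group.

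Third, I would transport the componentwise operations $(+_i,-_i,0_i)_{i=1}^t$ on $\prod_{i=1}^t B_i$ back through $\varphi$ to obtain $+,-,0$ on $A$. Then $\ab{A'} := \algop{A}{+,-,0,(f_i)_{i\in S}}$ is by construction isomorphic via $\varphi$ to $\prod_{i=1}^t \ab{B}_i'$, and $0^{\ab{A'}} = \varphi^{-1}(z_1,\ldots,z_t) = z$, as required.

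What remains is to pin down the precise supernilpotency class bound $k_i = (\mu(p_i^{\alpha_i}-1))^{\alpha_i - 1}$ for $\ab{B}_i'$. I expect this bound to be either recorded inside the statement or proof of \cite[Theorem~4.2]{Ai:BTFS}, or obtainable by induction along a central chain in $\ab{B}_i$ of length at most $\alpha_i$: each of the $\alpha_i - 1$ refinement steps multiplies the absorbing-degree bound by a factor of at most $\mu(p_i^{\alpha_i}-1)$, corresponding respectively to the maximum arity of the fundamental operations and the number of nonzero elements that can occur in the coordinate factor. This tracking of the supernilpotency class, rather than the existence of the group expansion itself, is where I expect the main bookkeeping effort to lie.
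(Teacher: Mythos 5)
Your proposal follows the paper's proof essentially step for step: decompose $\ab{A}$ via \cite{Ke:CMVW} into nilpotent factors of prime power order, coordinatize each factor $\ab{B}_i$ by \cite[Theorem~4.2]{Ai:BTFS} with the $i$\,th coordinate of $z$ as the new zero element, and transport the componentwise group operations back through the isomorphism. The one ingredient you leave open, the explicit bound $k_i = (\mu(p_i^{\alpha_i}-1))^{\alpha_i-1}$, is precisely what the paper obtains by a direct citation of \cite[Theorem~1.2]{Ai:BTFS}, so no further bookkeeping is required.
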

                  \begin{proof}
                  Since the result is true for $|A| = 1$, we henceforth assume $|A| \ge 2$.
                  By \cite{Ke:CMVW}, $\ab{A}$ is isomorphic to a direct product
                  $\prod_{i=1}^t \ab{B}_i$ of nilpotent algebras of prime power order.
                  We let $(\pi_1 (a), \ldots, \pi_t(a))$ denote
                  the image of $a$ of the underlying isomorphism.
                  As a finite supernilpotent algebra in a congruence
                  modular variety, $\ab{A}$ is nilpotent (cf. \cite[Lemma~2.4]{Ai:BTFS}) and
                  therefore has a Mal'cev term \cite[Theorem~6.2]{FM:CTFC}.
                  We use
                  \cite[Theorem~4.2]{Ai:BTFS} to expand each $\ab{B}_i$ with operations
                  $+_i$ and $-_i$ such that
                  the expansion $\ab{B}_i'$ is a nilpotent expanded elementary abelian group with
                  zero element $\pi_i (z)$. By \cite[Theorem~1.2]{Ai:BTFS},
                  $\ab{B}_i'$ is $k_i$-supernilpotent.  \end{proof}

                  We note that the supernilpotency degree of $\ab{A}'$ may be strictly larger
                  than the supernilpotency degree of $\ab{A}$.

                                  Combining these results, we obtain the following result on polynomial mappings
                                  on arbitrary finite supernilpotent algebras in congruence modular
                                  varieties.
       \begin{theorem} \label{thm:snp}
          Let $\mu \in \N$,           
          let $\ab{A}$ be a finite supernilpotent algebra in a congruence
          modular variety all of whose fundamental operations have arity at most $\mu$.
          Let $p_1, \ldots, p_t$ be distinct primes, and let $\alpha_1, \ldots, \alpha_t \in \N$ such
          that $|\ab{A}| = \prod_{i=1}^t p_i^{\alpha_i}$.
          Let $F \in \Pol_{n,s} (\ab{A})$ be a polynomial
          map from $A^n$ to $A^s$, and let $z \in A$.
          Then for every $\vb{a} \in A^n$ there is $\vb{y} \in A^n$
          such that $F (\vb{y}) = F (\vb{a})$ and
          \(
          |\{ j \in \uli{n} : \vb{y} (j) \neq z\}| \le
                s \sum_{i = 1}^t (\mu (p_i^{\alpha_i} - 1))^{\alpha_i - 1} \alpha_i (p_i-1)
        \le       s \mu^{-1} |A|^{\log_2 (\mu) + \log_2 (|A|)} \log_2 (|A|) \le s |A|^{\log_2 (\mu) + \log_2 (|A|) + 1}.
          \)
        \end{theorem}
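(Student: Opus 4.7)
The strategy is to reduce to the direct-product case already handled in Theorem~\ref{thm:sysallg} by installing a suitable group structure on $A$ via Lemma~\ref{lem:exexp}.

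I first apply Lemma~\ref{lem:exexp} to the given element $z \in A$, obtaining an expansion $\ab{A}'$ of $\ab{A}$ by operations $+$, $-$, $0$ such that $\ab{A}' \cong \prod_{i=1}^t \ab{B}_i'$, where each $\ab{B}_i'$ is a $k_i$-supernilpotent expanded elementary abelian group of order $p_i^{\alpha_i}$ with $k_i = (\mu(p_i^{\alpha_i}-1))^{\alpha_i - 1}$, and with $0^{\ab{A}'} = z$. Because $\ab{A}'$ expands $\ab{A}$, every polynomial function of $\ab{A}$ is also a polynomial function of $\ab{A}'$, so $F \in \Pol_{n,s}(\ab{A}')$. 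Applying Theorem~\ref{thm:sysallg} to $\ab{A}'$, $F$ and $\vb{a}$ then yields $\vb{y} \in A^n$ with $F(\vb{y}) = F(\vb{a})$ and weight (with respect to the zero $z$) at most $\sum_{i=1}^t k_i s \alpha_i (p_i-1)$. Since this weight is exactly $|\{j \in \uli{n} : \vb{y}(j) \ne z\}|$, the first of the three bounds in the theorem is immediate.

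The remaining task is purely numerical. For each $i$, using $p_i^{\alpha_i} - 1 < p_i^{\alpha_i}$ and $p_i - 1 \le p_i^{\alpha_i}$, I would bound
\[
k_i \alpha_i (p_i-1) \le \mu^{\alpha_i - 1} (p_i^{\alpha_i})^{\alpha_i} \alpha_i.
\]
Combining $\alpha_i \le \log_2(p_i^{\alpha_i})$ (since $p_i \ge 2$) with the identity $\mu^{\log_2 x} = x^{\log_2 \mu}$ gives $\mu^{\alpha_i - 1} \le \mu^{-1} (p_i^{\alpha_i})^{\log_2 \mu}$ and $(p_i^{\alpha_i})^{\alpha_i} \le (p_i^{\alpha_i})^{\log_2(p_i^{\alpha_i})}$. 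Together with $p_i^{\alpha_i} \le |A|$, these show that each summand is at most $\mu^{-1} |A|^{\log_2 \mu + \log_2 |A|} \alpha_i$. Summing over $i$ and using $\sum_i \alpha_i \le \log_2 |A|$ yields the middle bound, and the final inequality follows from $\mu^{-1} \log_2 |A| \le |A|$.

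The conceptual content lies entirely in combining Lemma~\ref{lem:exexp} with Theorem~\ref{thm:sysallg}; the one thing to watch is that $z$ must be chosen as the zero of the group expansion so that the weight appearing in Theorem~\ref{thm:sysallg} is precisely $|\{j : \vb{y}(j) \ne z\}|$. The main obstacle, if any, is keeping the chain of arithmetic estimates tidy enough to land on exactly the stated bounds.
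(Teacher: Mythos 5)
Your proposal is correct and follows essentially the same route as the paper's proof: expand $\ab{A}$ via Lemma~\ref{lem:exexp} with zero element $z$, observe that $F$ remains a polynomial map of the expansion, apply Theorem~\ref{thm:sysallg}, and finish with arithmetic estimates. The only (immaterial) difference is in the bookkeeping of the final inequalities, where you bound each summand separately and use $\sum_i \alpha_i \le \log_2|A|$ at the end, while the paper replaces each $\alpha_i$ by $\log_2|A|$ up front and bounds $\sum_i (p_i^{\alpha_i})^{\log_2|A|}$ by $\bigl(\prod_i p_i^{\alpha_i}\bigr)^{\log_2|A|}$; both chains land on the stated bounds.
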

       \begin{proof} Let $\ab{A'} = \prod_{i=1}^t \ab{B}_i'$ be the expansion of $\ab{A}$ produced by
       Lemma~\ref{lem:exexp}, and for each $i \in \uli{t}$, let $p_i \in \mathbb{P}$ and $\alpha_i \in \N$ be such that
       $|\ab{B}_i| = p_i^{\alpha_i}$.
       Clearly, $F$ is a also a polynomial map of
       $\ab{A'}$.
       Let $k_i  = (\mu (p_i^{\alpha_i} - 1))^{\alpha_i - 1}$.
       Then Theorem~\ref{thm:sysallg} yields $\vb{y} \in A^n$ such that
       $|\{ j \in \uli{n} : \vb{y} (j) \neq z \}|
       \le \sum_{i = 1}^t k_i s \alpha_i (p_i-1)$.
       Using the obvious estimate $\alpha_i \le \log_2 (|A|)$, we obtain
    \begin{multline*}
      \sum_{i = 1}^t k_i s \alpha_i (p_i-1)  = 
        s \sum_{i=1}^t (\mu (p_i^{\alpha_i} - 1))^{\alpha_i - 1} \log_2 (|A|)  (p_i - 1) \\ \le 
        s \log_2 (|A|) \sum_{i=1}^t \mu^{\alpha_i - 1} (p_i^{\alpha_i})^{\alpha_i -1} p_i^{\alpha_i} \le 
        s \log_2 (|A|) \sum_{i=1}^t \mu^{\log_2(|A|) - 1} (p_i^{\alpha_i})^{\alpha_i}   \\\le
        s \mu^{\log_2  (|A|) - 1} \log_2 (|A|) \sum_{i=1}^t (p_i^{\alpha_i})^{\log_2 (|A|)} \le 
        s \mu^{\log_2  (|A|) - 1} \log_2 (|A|) ( \sum_{i=1}^t p_i^{\alpha_i} )^{\log_2 (|A|)} \\  \le
        s \mu^{\log_2  (|A|) - 1} \log_2 (|A|) ( \prod_{i=1}^t p_i^{\alpha_i} )^{\log_2 (|A|)} \le
        s \mu^{\log_2  (|A|) - 1} \log_2 (|A|) |A|^{\log_2 (|A|)}  \\ =
        s \mu^{-1} |A|^{\log_2 (\mu) + \log_2 (|A|)} \log_2 (|A|)  \le
        s |A|^{\log_2 (\mu) + \log_2 (|A|) + 1}.
      \end{multline*}
        \end{proof}
       \section{Systems of equations}
       We will now explain how these results give a polynomial time algorithm
       for solving systems of a fixed number of equations over the finite
       supernilpotent algebra $\ab{A}$. The size $m$ of a system of
       polynomial equations is measured
       as the length of the polynomial terms used to represent the system.
       For measuring the ``running time'' of our algorithm, we count
       the number of $\ab{A}$-operations: each such $\ab{A}$-operation, may,
       for example, be done by looking up one value in the operation tables
       defining $\ab{A}$.
        \begin{theorem} \label{thm:polsyssat}
          Let $\ab{A}$ be a finite supernilpotent algebra in a congruence modular
          variety all of
          whose fundamental operations are of arity at most $\mu$,
          and let $s \in \N$. We consider the following algorithmic problem
          $s$-$\textsc{PolSysSat} (\ab{A})$:
          \begin{quote}
            \textbf{Given:} $2s$ polynomial terms $f_1, g_1, \ldots, f_s, g_s$ over $\ab{A}$. \\
            \textbf{Asked:} Does the system $f_1 \approx g_1, \ldots, f_s \approx g_s$ have a solution
            in~$\ab{A}$?
          \end{quote}
          Let $m$ be the length of the input of this system, and 
          let
         \[
         e := s |A|^{\log_2 (\mu) + \log_2 (|A|) + 1} + 1.
         \]
         Then we can decide $s$-$\textsc{PolSysSat} (\ab{A})$ using at most
         $O(m^{e-1})$ evaluations of all terms occuring in the system.
         Therefore, we have an algorithm
         that determines whether a system of $s$ polynomial equations over $\ab{A}$ 
         has a solution using $O(m^e)$ many $\ab{A}$-operations.
       \end{theorem}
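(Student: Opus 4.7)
The plan is to reduce $s$-$\textsc{PolSysSat} (\ab{A})$ to a brute-force search over assignments of small ``weight'' (number of coordinates differing from a fixed reference element), where the admissible weight bound is exactly the one delivered by Theorem~\ref{thm:snp}. First I would fix any element $z \in A$ and invoke Lemma~\ref{lem:exexp} to expand $\ab{A}$ to an algebra $\ab{A}'$ by adjoining group operations $+,-,0$ with $0^{\ab{A}'} = z$. The expansion $\ab{A}'$ remains a finite supernilpotent algebra in a congruence modular variety, and its maximal arity is $\max(\mu,2)$; the bound below then works as soon as $\mu \geq 2$, which we may assume since $\mu \leq 1$ is trivial.

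Because $\ab{A}'$ carries subtraction, the map
\[
F := (f_1 - g_1, \ldots, f_s - g_s)
\]
belongs to $\Pol_{n,s} (\ab{A}')$, and a tuple $\vb{a} \in A^n$ is a solution of the system $f_1 \approx g_1, \ldots, f_s \approx g_s$ in $\ab{A}$ if and only if $F(\vb{a})$ equals the zero tuple in $A^s$. Applying Theorem~\ref{thm:snp} to $F$ with reference element $z$, any zero $\vb{a}$ of $F$ yields $\vb{y} \in A^n$ with $F(\vb{y}) = F(\vb{a}) = (0,\ldots,0)$ and
\[
|\{ j \in \uli{n} : \vb{y}(j) \neq z \}| \leq s |A|^{\log_2(\mu) + \log_2(|A|) + 1} = e - 1.
\]
Hence the system has a solution if and only if it has one whose weight with respect to $z$ is at most $e-1$.

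The algorithm therefore enumerates every $\vb{y} \in A^n$ with at most $e-1$ coordinates different from $z$, and for each such $\vb{y}$ evaluates the $2s$ input terms and tests whether $f_i^{\ab{A}}(\vb{y}) = g_i^{\ab{A}}(\vb{y})$ for all $i \in \uli{s}$. The number of such candidates is at most
\[
\sum_{i=0}^{e-1} \binom{n}{i} (|A|-1)^i = O(n^{e-1}),
\]
and because every variable of the system contributes to the length of the input, $n \leq m$, giving $O(m^{e-1})$ evaluations of the system. Since each of the $2s$ terms has length at most $m$, a single evaluation uses $O(m)$ many $\ab{A}$-operations, and the total cost is $O(m^e)$ $\ab{A}$-operations, as claimed.

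The main delicacy will be the passage from $\ab{A}$ to the expansion $\ab{A}'$: one has to verify that Theorem~\ref{thm:snp}, applied to $\ab{A}'$ rather than to $\ab{A}$, still produces the advertised exponent $\log_2(\mu) + \log_2(|A|) + 1$. This is exactly what breaks if $\mu < 2$, since adjoining $+$ increases the arity to $2$; under the harmless assumption $\mu \geq 2$ the maximal arity is unchanged. Once this point is settled, the rest of the argument is routine bookkeeping: counting low-weight assignments via the binomial estimate above, and observing that each term evaluation is linear in the input length.
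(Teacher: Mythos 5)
Your proposal is correct and follows the same overall strategy as the paper (reduce to a search over assignments of bounded weight via Theorem~\ref{thm:snp}, then brute force with the binomial count), but the one genuinely different step is how you turn the system $f_i \approx g_i$ into the statement ``a polynomial map takes a prescribed constant value.'' You pass to the expansion $\ab{A}'$ of Lemma~\ref{lem:exexp} and use the adjoined group subtraction, forming $F = (f_1 - g_1, \ldots, f_s - g_s)$; this forces you to apply Theorem~\ref{thm:snp} to $\ab{A}'$, whose maximal arity is $\max(\mu,2)$, and hence to argue separately that the exponent is unharmed when $\mu \ge 2$ and that the case $\mu \le 1$ is degenerate (it is: a supernilpotent algebra in a congruence modular variety has a Mal'cev term, and with only unary and nullary operations every term depends on at most one variable, so $|A|=1$ --- worth spelling out). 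The paper avoids this entirely by staying inside $\ab{A}$: it takes the Mal'cev term $d$ of $\ab{A}$, sets $h_i(\vb{x}) := d(f_i(\vb{x}), g_i(\vb{x}), z)$, applies Theorem~\ref{thm:snp} to $H=(h_1,\dots,h_s) \in \Pol_{n,s}(\ab{A})$ with the original $\mu$, and recovers $f_i(\vb{y}) = g_i(\vb{y})$ from the injectivity of $x \mapsto d(x, g_i(\vb{y}), z)$ (via \cite[Corollary~7.4]{FM:CTFC}). Your route buys a more transparent equivalence ($F(\vb{a})=0$ iff $\vb{a}$ solves the system, with no injectivity argument needed), at the cost of the arity bookkeeping and of re-verifying that $\ab{A}'$ satisfies the hypotheses of Theorem~\ref{thm:snp} (it does: it is a direct product of nilpotent algebras of prime power order and has a Mal'cev term, hence is supernilpotent in a congruence modular variety). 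Both are sound; the complexity analysis at the end is identical to the paper's.
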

        \begin{proof}
          Let $n$ be the number of different variables that occur in the
          given system. We may assume that these variables
          are $x_1,\ldots, x_n$, and that our system is
          $\bigwedge_{i=1}^s f_i (x_1, \ldots, x_n) \approx g_i (x_1, \ldots, x_n)$.
          We choose an element $z \in A$, and we 
          will show: if this system has a solution in $\vb{a} \in A^n$,
          then it has a solution in 
              \[
           C :=
           \{ \vb{y} \in A^n : |\{ j \in \uli{n} : \vb{y} (j) \neq z \}| \le e - 1 \}.
       \]
        For proving this claim, we first observe that
        $\ab{A}$ is a finite nilpotent algebra in a congruence modular variety,
       and it therefore has a Mal'cev term $d$.
        We consider the polynomial map $H = (h_1, \ldots, h_s)$, where
        $h_i (\vb{x}) := d(f_i (\vb{x}), g_i (\vb{x}), z)$ for $i \in \uli{s}$ and $\vb{x} \in A^n$.
        Since $\vb{a}$ is a solution of the system, $H(\vb{a}) = (z,z,\ldots, z)$.
        By Theorem~\ref{thm:snp}, there is $\vb{y} \in C$ such that
        $H(\vb{y}) = H(\vb{a})$.
        Then for every $i \in \uli{s}$, we have
        $d (f_i (\vb{y}), g_i (\vb{y}), z) = z$.
        By \cite[Corollary~7.4]{FM:CTFC}, the function $x \mapsto d(x,g_i (\vb{y}),z)$ is injective.
        Since $d(f_i (\vb{y}), g_i (\vb{y}), z) =
        z = d(g_i (\vb{y}), g_i (\vb{y}), z)$, this injectivity implies that $f_i (\vb{y}) = g_i (\vb{y})$.
        Hence $\vb{y}$ is a solution that lies in $C$.

        The algorithm for solving the system now simply evaluates the system at all places in
        $C$; if a solution is found, the answer is ``yes''. If we find no solution inside $C$ , we answer ``no'',
        and by the argument above, we know that in this case, the system has no solution inside $\ab{A}^n$ at all.

        We now estimate the complexity of this procedure:
       There is a $c \in \N$ such that for all $n \in \N$, $|C| \le c_1 n^{e-1}$,
       hence we have to do $O(n^{e-1})$ evaluations of all the terms $f_i, g_i$ in the system. 
       Such an evaluation can be done using at most  $O(m)$ many  $\ab{A}$-operations.
            Since the length of the input $m$ is at least the number of variables $n$ occuring in it, this solves $s$-$\textsc{PolSysSat} (\ab{A})$ using at most
            $O(m^e)$ many $\ab{A}$-operations.
        \end{proof}       

       \section{Circuit satisfiability}
       With every finite algebra $\ab{A}$,
       \cite{IK:SIMC} associates a number of computational problems that
       involve circuits whose gates are taken from the fundamental operations of
       $\ab{A}$. One of these problems is $\textsc{SCsat} (\ab{A})$. It takes
       as an input $2s$ circuits $f_1, g_1, \ldots, f_s, g_s$ over $\ab{A}$  with $n$ input
       variables, and asks whether there is a $\vb{a} \in A^n$ such that
       the evaluations at $\vb{a}$ satisfy $f_i (\vb{a}) = g_i (\vb{a})$ for
       all $i \in \uli{s}$. For finite algebras in congruence modular varieties,
       \cite[Corollary~3.13]{LZ:TTCS} implies that
       $\textsc{SCsat} (\ab{A})$ is in $\PP$ when $\ab{A}$ is abelian,
       and $\NP$-complete otherwise.
       However, if we restrict the number $s$ of circuits, we obtain a different
       problem, which we call $s$-$\textsc{SCsat} (\ab{A})$ in the sequel.
       Obviously, $1$-$\textsc{SCsat} (\ab{A})$ is the circuit satisfiability
       problem called $\textsc{Csat} (\ab{A})$ in \cite{IK:SIMC}.
       The method used to prove Theorem~\ref{thm:polsyssat} immediately
       yields:
       \begin{theorem}
         Let $\ab{A}$ be a finite supernilpotent algebra in a congruence
         modular variety, and let $s \in \N$. Then
         $s$-$\textsc{SCsat} (\ab{A})$ is in $\PP$.
       \end{theorem}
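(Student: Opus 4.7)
The plan is to adapt the proof of Theorem~\ref{thm:polsyssat} essentially verbatim to the circuit setting. The two key observations are that (i) a function computed by a circuit whose gates are fundamental operations of $\ab{A}$ is a polynomial function of $\ab{A}$, so Theorem~\ref{thm:snp} applies unchanged, and (ii) a circuit of total size $m$ can be evaluated at any given tuple in $O(m)$ $\ab{A}$-operations by traversing its gates in topological order with one operation-table lookup per gate.

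Concretely, I would fix a Mal'cev term $d$ of $\ab{A}$ (which exists by \cite[Theorem~6.2]{FM:CTFC}) and an arbitrary element $z \in A$. For an input of $2s$ circuits $f_1, g_1, \ldots, f_s, g_s$ in $n$ variables of total length $m$, define $H = (h_1, \ldots, h_s) \in \Pol_{n,s}(\ab{A})$ by $h_i(\vb{x}) := d(f_i(\vb{x}), g_i(\vb{x}), z)$. Exactly as in Theorem~\ref{thm:polsyssat}, the injectivity of $x \mapsto d(x, g_i(\vb{y}), z)$ from \cite[Corollary~7.4]{FM:CTFC} shows that $\vb{a}$ solves the system if and only if $H(\vb{a}) = (z, \ldots, z)$. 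Setting $e := s|A|^{\log_2(\mu) + \log_2(|A|) + 1} + 1$, Theorem~\ref{thm:snp} applied to $H$ guarantees that whenever the system is solvable, it has a solution $\vb{y}$ with at most $e - 1$ entries distinct from $z$. Hence it suffices to enumerate the set
\[ C := \{\vb{y} \in A^n : |\{j \in \uli{n} : \vb{y}(j) \neq z\}| \le e - 1 \}, \]
which has cardinality $O(n^{e-1}) = O(m^{e-1})$, and for each $\vb{y} \in C$ evaluate the $2s$ input circuits at $\vb{y}$ in $O(m)$ $\ab{A}$-operations to test whether $f_i(\vb{y}) = g_i(\vb{y})$ for all $i \in \uli{s}$. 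The total cost is $O(m^e)$ $\ab{A}$-operations, which is polynomial in $m$ because $e$ depends only on $\ab{A}$ and $s$.

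The only subtlety worth commenting on, and the reason the argument transfers to circuits without extra work, is that a circuit of size $m$ may represent a polynomial function whose shortest polynomial-term expression is exponentially longer than $m$, so one might worry that this blow-up leaks into the complexity of the search. However, the weight bound of Theorem~\ref{thm:snp} depends exclusively on $|A|$, $\mu$, and $s$, and is completely insensitive to how the underlying polynomial function is encoded. Thus the combinatorial content of the proof is identical; all that is needed on the algorithmic side is efficient evaluation of the circuits, which is immediate. I therefore expect no real obstacle, and the proof is essentially a rewording of the one for Theorem~\ref{thm:polsyssat} with ``circuit'' substituted for ``polynomial term'' throughout.
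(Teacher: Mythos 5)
Your proof is correct and is exactly what the paper intends: the paper gives no separate argument for this theorem, stating only that the method of Theorem~\ref{thm:polsyssat} ``immediately yields'' it, and your writeup is precisely that adaptation, including the correct observation that the weight bound of Theorem~\ref{thm:snp} is independent of how the polynomial map is encoded.
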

       Hence a supernilpotent, but not abelian algebra $\ab{A}$ has
       $s$-$\textsc{SCsat} (\ab{A})$ in $\PP$, whereas
       $\textsc{SCsat}$ is $\NP$-complete.
       In the converse direction, Theorem~9.1 from \cite{IK:SIMC}
       has the following corollary.
       \begin{corollary}
         Let $\ab{A}$ be a finite algebra from a congruence modular variety.
         If $\ab{A}$ has no homomorphic image $\ab{A}'$ such that
         $2$-$\textsc{SCsat} (\ab{A}')$ is $\NP$-complete, then
         $\ab{A}$ is nilpotent.
       \end{corollary}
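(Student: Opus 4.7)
The plan is to establish the corollary by proving its contrapositive: if $\ab{A}$ is a finite algebra in a congruence modular variety that is \emph{not} nilpotent, then some homomorphic image $\ab{A}'$ of $\ab{A}$ has $2$-$\textsc{SCsat}(\ab{A}')$ $\NP$-complete. Since the corollary is advertised as a consequence of \cite[Theorem~9.1]{IK:SIMC}, essentially all of the combinatorial/complexity content is already in that cited theorem; my job is to connect the structural hypothesis (non-nilpotency in a CM variety) to the hypothesis of \cite[Theorem~9.1]{IK:SIMC} by finding the right quotient.

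First, I would invoke the commutator-theoretic description of nilpotency in congruence modular varieties from \cite{FM:CTFC}: an algebra is nilpotent iff its upper central series terminates at the top congruence. Since $\ab{A}$ is not nilpotent, we can quotient out by the limit of the upper central series (or, more concretely, find a prime quotient $\mu \prec \nu$ in $\operatorname{Con}(\ab{A})$ whose commutator $[\nu,\nu]$ is not strictly below $\nu$ modulo the centralizer) to obtain a homomorphic image $\ab{A}'$ that exhibits a non-abelian section and therefore a genuine ``non-commuting'' pair of polynomial operations on certain blocks.

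Second, I would plug $\ab{A}'$ into \cite[Theorem~9.1]{IK:SIMC}. That theorem supplies, for an algebra with such a non-nilpotent structural feature inside a CM variety, an $\NP$-hardness reduction into the satisfiability problem for a system of two circuit equations over $\ab{A}'$ (one circuit is used to simulate the Boolean variables via the non-abelian section, the second encodes the SAT instance). The reduction is in polynomial time, and membership in $\NP$ is immediate since circuits are evaluated in polynomial time. Hence $2$-$\textsc{SCsat}(\ab{A}')$ is $\NP$-complete, which contradicts the assumption of the corollary.

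The main obstacle — or rather the main bookkeeping step — is choosing the correct homomorphic image in the first part, i.e.\ verifying that failure of nilpotency forces, after a suitable quotient, precisely the structural condition assumed by \cite[Theorem~9.1]{IK:SIMC}. This is a routine application of tame congruence/commutator theory in the CM setting, but one has to be careful that the witness section survives in a \emph{homomorphic image}, not merely a subalgebra or quotient of a subalgebra; passing to the quotient by the top of the upper central series achieves this. Once this translation is made, the corollary follows by contraposition.
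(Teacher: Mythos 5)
There is a genuine gap: your reading of \cite[Theorem~9.1]{IK:SIMC} is not what that theorem says, and the argument you build on it would fail. Theorem~9.1 is a \emph{structure} theorem whose hypothesis is about $\textsc{Csat}$ (a \emph{single} circuit equation): if no homomorphic image of $\ab{A}$ has $\NP$-complete $\textsc{Csat}$, then $\ab{A} \cong \ab{N} \times \ab{D}$ with $\ab{N}$ nilpotent and $\ab{D}$ a subdirect product of two-element algebras polynomially equivalent to the two-element lattice. It does \emph{not} assert that non-nilpotency yields an $\NP$-hardness reduction into a two-equation circuit problem over some quotient; indeed it cannot, because the two-element lattice itself is a non-nilpotent algebra in a congruence modular variety for which $\textsc{Csat}$ of every homomorphic image is in $\PP$ (a single equation between monotone circuits is satisfiable iff it holds at the all-zeros or all-ones tuple). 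So your contrapositive, as sketched, breaks exactly on the lattice-like factor $\ab{D}$, and your first step (quotienting by the upper central series to expose a non-abelian section) does not produce the hypothesis Theorem~9.1 actually needs.

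The paper's proof has to do two things that your proposal omits. First, it converts the corollary's hypothesis about $2$-$\textsc{SCsat}$ into the hypothesis of Theorem~9.1 about $\textsc{Csat}$, via the trivial reduction that duplicates a single equation into a system of two; this yields the decomposition $\ab{A} \cong \ab{N} \times \ab{D}$. Second, it eliminates the factor $\ab{D}$: if $|\ab{D}|>1$ then $\ab{A}$ has a homomorphic image polynomially equivalent to the two-element lattice, and by \cite{GK:TCOP} the problem $2$-$\textsc{SCsat}$ for such an algebra is $\NP$-complete (even though $\textsc{Csat}$ for it is not), contradicting the hypothesis. This second step is the entire reason the corollary is stated for $2$-$\textsc{SCsat}$ rather than $\textsc{Csat}$, and it is precisely the content your proposal is missing. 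No commutator-theoretic analysis of the upper central series is needed.
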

       \begin{proof}
       Suppose that $\ab{A}$ has a homomorphic image $\ab{A'}$ for
       which $\textsc{Csat} (\ab{A'})$ is $\NP$-complete.
       Then also $2$-$\textsc{SCsat} (\ab{A'})$ is $\NP$-complete
       because an algorithm solving $2$-$\textsc{SCsat}$ can be used
       to solve an instance $(\exists \vb{a}) (f (\vb{a}) = g(\vb{a}))$
       of $\textsc{Csat} (\ab{A'})$ by solving $2$-$\textsc{SCsat}$ on the input
       $(\exists \vb{a}) (f (\vb{a}) = g(\vb{a}) \,\, \& \, f (\vb{a}) = g(\vb{a}))$.
       Thus the assumptions imply that for no homomorphic image $\ab{A}'$ of $\ab{A}$,
       the problem $\textsc{Csat} (\ab {A}')$ is $\NP$-complete.
       Now by \cite[Theorem~9.1]{IK:SIMC},  $\ab{A}$ is isomorphic to
       $\ab{N} \times \ab{D}$, where $\ab{N}$ is nilpotent and $\ab{D}$
       is a subdirect product of $2$-element algebras each of which is
       polynomially equivalent to a two element lattice.
       If $|\ab{D}| > 1$, then there is a homomorphic image
       $\ab{A}_2$ of $\ab{A}$ such that $\ab{A}_2$ is polynomially equivalent to
       a two element lattice. By \cite{GK:TCOP}, $2$-$\textsc{SCsat} (\ab{A}_2)$
       is $\NP$-complete, contradicting the assumptions.
       Hence $|\ab{D}| = 1$, and therefore $\ab{A}$ is nilpotent.  \end{proof}

       \section*{Acknowledgements}
       The author thanks M.\ Kompatscher for dicussions on solving equations over
      nilpotent algebras.
       These discussions took place during a workshop organized by P.\ Aglian\`o at the
      University of Siena in June 2018.
      The author also thanks A.\ F\"oldv{\'a}ri, C.\ Szab{\'o}, M.\ Kompatscher, and S.\ Kreinecker
      for their comments on preliminary versions of the manuscript.

\def\cprime{$'$}


\begin{thebibliography}{MMT87}

\bibitem[{Aic}18]{Ai:BTFS}
E.~{Aichinger}.
\newblock {Bounding the free spectrum of nilpotent algebras of prime power
  order}.
\newblock {\em ArXiv e-prints}, 1805.01796, 2018.
\newblock To appear in the Israel Journal of Mathematics.

\bibitem[Alo99]{Al:CN}
N.~Alon.
\newblock Combinatorial {N}ullstellensatz.
\newblock {\em Combin. Probab. Comput.}, 8(1-2):7--29, 1999.
\newblock Recent trends in combinatorics (M\'{a}trah\'{a}za, 1995).

\bibitem[AM10]{AM:SAOH}
E.~Aichinger and N.~Mudrinski.
\newblock Some applications of higher commutators in {M}al'cev algebras.
\newblock {\em Algebra Universalis}, 63(4):367--403, 2010.

\bibitem[AMO18]{AMO:COTR}
E.~Aichinger, N.~Mudrinski, and J.~Opr\v{s}al.
\newblock Complexity of term representations of finitary functions.
\newblock {\em Internat. J. Algebra Comput.}, 28(6):1101--1118, 2018.

\bibitem[Bri11]{Br:CTWR}
D.~Brink.
\newblock Chevalley's theorem with restricted variables.
\newblock {\em Combinatorica}, 31(1):127--130, 2011.

\bibitem[Eis95]{Ei:CA}
D.~Eisenbud.
\newblock {\em Commutative algebra}.
\newblock Springer-Verlag, New York, 1995.

\bibitem[FM87]{FM:CTFC}
R.~Freese and R.~N. McKenzie.
\newblock {\em Commutator Theory for Congruence Modular varieties}, volume 125
  of {\em London Math. Soc. Lecture Note Ser.}
\newblock Cambridge University Press, 1987.

\bibitem[F{\"{o}}l17]{Fo:TCOT17}
A.~F{\"{o}}ldv\'{a}ri.
\newblock The complexity of the equation solvability problem over semipattern
  groups.
\newblock {\em Internat. J. Algebra Comput.}, 27(2):259--272, 2017.

\bibitem[F{\"o}l18]{Fo:TCOT}
A.~F{\"o}ldv{\'{a}}ri.
\newblock The complexity of the equation solvability problem over nilpotent
  groups.
\newblock {\em Journal of Algebra}, 495:289--303, 2018.

\bibitem[GK11]{GK:TCOP}
T.~Gorazd and J.~Krzaczkowski.
\newblock The complexity of problems connected with two-element algebras.
\newblock {\em Rep. Math. Logic}, 46:91--108, 2011.

\bibitem[Hor11]{Ho:TCOT}
G.~Horv\'ath.
\newblock The complexity of the equivalence and equation solvability problems
  over nilpotent rings and groups.
\newblock {\em Algebra Universalis}, 66(4):391--403, 2011.

\bibitem[IK18]{IK:SIMC}
P.~M. Idziak and J.~Krzaczkowski.
\newblock Satisfiability in multi-valued circuits.
\newblock In {\em Proceedings of the 33rd Annual {ACM/IEEE} Symposium on Logic
  in Computer Science, {LICS} 2018, Oxford, UK, July 09-12, 2018}, pages
  550--558, 2018.

\bibitem[Kea99]{Ke:CMVW}
K.~A. Kearnes.
\newblock Congruence modular varieties with small free spectra.
\newblock {\em Algebra Universalis}, 42(3):165--181, 1999.

\bibitem[Kom18]{Ko:TESP}
M.~Kompatscher.
\newblock The equation solvability problem over supernilpotent algebras with
  {M}al'cev term.
\newblock {\em Internat. J. Algebra Comput.}, 28(6):1005--1015, 2018.

\bibitem[KS15]{KS:TCOT}
G.~K{\'a}rolyi and C.~Szab\'{o}.
\newblock The complexity of the equation solvability problem over nilpotent
  rings.
\newblock {M}anuscript available at {\tt
  http://web.cs.elte.hu/\~{}csaba/publications/}, 2015.

\bibitem[KS18]{KS:EOPO}
G.~{K{\'a}rolyi} and C.~{Szab{\'o}}.
\newblock {Evaluation of Polynomials over Finite Rings via Additive
  Combinatorics}.
\newblock {\em ArXiv e-prints}, 1809.06543, 2018.

\bibitem[LZ06]{LZ:TTCS}
B.~Larose and L.~Z\'{a}dori.
\newblock Taylor terms, constraint satisfaction and the complexity of
  polynomial equations over finite algebras.
\newblock {\em Internat. J. Algebra Comput.}, 16(3):563--581, 2006.

\bibitem[MMT87]{MMT:ALVV}
R.~N. McKenzie, G.~F. McNulty, and W.~F. Taylor.
\newblock {\em Algebras, lattices, varieties, Volume {I}}.
\newblock Wadsworth \& Brooks/Cole Advanced Books \& Software, Monterey,
  California, 1987.

\end{thebibliography}
\end{document}